\newtheorem{Th}{Theorem}[section]
\newtheorem{Lemma}[Th]{Lemma}
\newtheorem{Cor}[Th]{Corollary}
\newtheorem{Prop}[Th]{Proposition}
 \theoremstyle{definition}
\newtheorem{Def}[Th]{Definition}
\newtheorem{Ex}[Th]{Example}
\newcommand{\ZZ}{\ensuremath{\mathbb{Z}}}
\newcommand{\RR}{\ensuremath{\mathbb{R}}}
\newcommand{\Co}{\ensuremath{\mathbb{C}}}
\newcommand{\vt}{\mathbf{t}}
\newcommand{\va}{\mathbf{a}}
\newcommand{\vx}{\mathbf{x}}
\newcommand{\supp}{\mathrm{supp} \ }
\begin{document}

\title[Tiling and weak tiling]{Tiling and weak tiling in $(\ZZ_p)^d$}

\author[G. Kiss, D. Matolcsi, M. Matolcsi, G. Somlai]{Gergely Kiss, D\'avid Matolcsi, M\'at\'e Matolcsi, G\'abor Somlai}
\address{Gergely Kiss: Alfr\'ed R\'enyi Institute of Mathematics, ELKH, H-1364 Budapest, Hungary, and Eötvös Loránd University, Pázmány Péter Sétány 1/C, H-1111, Budapest, Hungary}
\email{kiss.gergely@renyi.hu}
\address{M\'at\'e Matolcsi: Department of Analysis, Institute of Mathematics, Budapest University of Technology and Economics (BME), M\"{u}egyetem rkp. 3., H-1111 Budapest, Hungary (also at Alfred Renyi Institute of Mathematics, ELKH, POB 127 H-1364, Budapest, Hungary)}
\email{matomate@renyi.hu}
\address{D\'avid Matolcsi: Eötvös Loránd University, Pázmány Péter Sétány 1/C, H-1111, Budapest, Hungary}
\email{matolcsidavid@gmail.com}
\address{Gábor Somlai:  Eötvös Loránd University, Pázmány Péter Sétány 1/C, H-1111, Budapest, Hungary, and Alfr\'ed R\'enyi Institute of Mathematics, H-1364 Budapest, Hungary.}
\email{gabor.somlai@ttk.elte.hu}

\thanks{G.K.\ was supported by NKFIH grants K124749  and K142993, by Bolyai János Research Fellowship of the Hungarian Academy of Sciences and by ÚNKP-22-5 New National Excellence Program of the Ministry for Culture and
Innovation.}
\thanks{M.M.\ was supported by NKFIH grants K129335 and K132097.}
\thanks{G.S.\ was supported by NKFIH grants 138596  and 132625, and G.S is a holder of János Bolyai Research Fellowship.
The work of G.S. on the project leading to this application has received funding from the European Research Council (ERC) under the European Union’s Horizon 2020 research and innovation programme (grant agreement No. 741420).}

\begin{abstract}
We discuss the relation of tiling, weak tiling and spectral sets in finite abelian groups. In particular, in elementary $p$-groups $(\ZZ_p)^d$, we introduce an averaging procedure that leads to a natural object of study: a 4-tuple of functions which can be regarded as a common generalization of tiles and spectral sets.  We characterize such 4-tuples for $d=1, 2$, and prove some partial results for $d=3$.
\end{abstract}

\maketitle

\bigskip

\section{Introduction}

\medskip

The concept of weak tiling was introduced recently in \cite{convex}, in connection with Fuglede's conjecture for convex bodies in $\RR^d$. In this note we will study the relation of tiling, weak tiling and spectral sets in finite abelian groups, with particular attention to elementary $p$-groups $(\ZZ_p)^d$. Our motivation is to give a new tool to prove the "spectral $\to$ tile" direction of Fuglede's conjecture in finite abelian groups.

\medskip

We begin by recalling the necessary notions and fixing our notation.

\medskip

The cardinality of any finite set $A$ will be denoted by $|A|$. We use the standard notation $A\pm B=\{a\pm b: a\in A, \ b\in B\}$, and $kA=\{ka: \ a\in A\}$, for any positive integer $k$. For any $n$, let $\ZZ_n$ denote the cyclic group of order $n$.

\medskip

Let $G$ be a finite abelian group. A {\it character} is a homomorphism from $G$ to the complex unit circle $\mathbb{T}$. The dual group, denoted by $\widehat{G}$, is the collection of all characters of $G$. For a function $f: G\to \Co$, the Fourier transform $\widehat{f}: \widehat{G}\to \Co$ is defined as
$$\widehat{f}(\gamma)=\sum_{x\in G} f(x)\gamma(x).$$

\medskip

A set $A\subset G$ is called {\it spectral} if the function space $L^2(A)$ admits an orthogonal basis consisting of characters restricted to $A$. Such an orthogonal basis of characters is called a {\it spectrum} of $A$. (The spectrum, if it exists, is not necessarily unique.)

\medskip

We say that a set $A\subset G$ {\it tiles} $G$ if there exists another set $B\subset G$ such that each $g\in G$ can be written uniquely as $g=a+b$, where $a\in A, b\in B$. We usually express this relation as $A\oplus B=G$ or, in the functional notation, $1_A\ast 1_B=1_G$, where $\ast$ denotes convolution. The convolution of any two functions $f, g: G\to \Co$ is defined in the standard way as $(f\ast g)  (x)=\sum_{y\in G} f(x-y)g(y)$.

\medskip

For any function $f:G\to \Co$ the function $f_{-}$ is defined as $f_{-}(x)=f(-x)$.
Some basic properties of the Fourier transform read as follows: $\widehat{f\ast g}=\widehat{f}\cdot \widehat{g}$, $\widehat{f_{-}}(\gamma )=\overline{\widehat{f}{(\gamma)}}$, $\widehat{f\ast f_{-}}=|\widehat{f}|^2$, and for real-valued even functions $\widehat{\widehat{f}}=|G|f$.

\medskip

Fuglede's conjecture \cite{fug} stated that a set $A\subset G$ is spectral if and only if it tiles $G$. The conjecture was formulated explicitly in $\RR^d$, but Fuglede already mentioned that the notions make sense in any locally compact abelian group $G$. In fact, the first counterexample by Tao \cite{tao} in $\RR^5$ was based on a counterexample in the finite group $(\ZZ_3)^5$. Since then, further counterexamples \cite{mfm, mk1} have been constructed (all based on examples in finite groups) to both directions of the conjecture in $\RR^d$, $d\ge 3$. The conjecture (both directions of it) remains open for $\mathbb{R}$ and $\mathbb{R}^2$. Some further connections between the discrete and the continuous settings have been revealed by Dutkay and Lai in \cite{dl}.

\medskip

In this paper we will restrict our attention to finite abelian groups, and mostly to the case $G=(\ZZ_p)^d$ with $p$ being a prime. For finite abelian groups, several results have been discovered in recent years \cite{fallon, fmv, ferguson, ios, ksrv1, ksrv2, ks1, klm, laba1, romkol, mattheus, rx1, rx2, zhang}.  For the purposes of this note, we single out the results of \cite{ios} and  \cite{ferguson, mattheus}. In \cite{ios} the authors prove both directions of Fuglede's conjecture in $(\ZZ_p)^2$. To the contrary, in \cite{ferguson} and \cite{mattheus} the authors prove (independently of each other) that for odd primes $p$ there exist spectral sets in $(\ZZ_p)^d$, $d\ge 4$, which do not tile the group, thus exhibiting a counterexample to the "spectral $\to$ tile" direction of Fuglede's conjecture in these groups.

\medskip

In connection with Fuglede's conjecture, the notion of {\it weak tiling} was introduced in \cite{convex}. For us, in the case of finite abelian groups, weak tiling can be formulated  as follows. A set $A\subset G$ tiles another set $E\subset G$ {\it weakly}, if there exists a nonnegative function $h: G\to \RR$ such that $1_A\ast h=1_E$. We will soon see that it makes sense to introduce the further restrictions that $h(0)=1$, the function $h$ is positive definite (i.e. $\widehat{h}\ge 0$), and restrict our attention to the case $E=G$.

\medskip

\begin{Def}\label{weaktile}
We say that a set $A\subset G$ {\it pd-tiles} $G$ {\it weakly} if there exists a nonnegative function $h: G\to \RR$ such that $h(0)=1$, $1_A\ast h=1_G$, and $\widehat{h}\ge 0$. \end{Def}

\medskip

In the terminology, "pd" stands for positive definite. Note that, due to positive definiteness, $h$ is necessarily an even function, $h(x)=h(-x)$. The assumption $h(0)=1$ is essential, otherwise the constant function $h=\frac{|G|}{|A|}$ would provide a weak pd-tiling for any set $A\subset G$. Note also, as a comparison with the terminology in \cite{convex},  that if $A$ pd-tiles $G$ weakly then $A$ tiles its complement weakly.

\medskip

A simple but important observation, also essentially contained in \cite{convex}, is that if $A$ pd-tiles $G$ weakly with a function $h$, then
\begin{equation}\label{supp}
A-A \ \cap \ \supp h=\{0\}.
\end{equation}

The reason is that $x=a-a'$ and $h(x)>0$ would imply $1_A\ast h(a)\ge h(0)1_A(a)+h(x)1_A(a')=1+h(x)>1$.

\medskip

Next, we show that a proper tiling always induces a weak pd-tiling of $G$.

\medskip

\begin{Lemma}\label{pdtile}
If $A\oplus B=G$ is a tiling, then $A$ pd-tiles $G$ weakly with $h_1=\frac{1}{|B|} 1_B\ast 1_{-B}$.
\end{Lemma}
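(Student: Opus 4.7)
The plan is to verify the three conditions of Definition \ref{weaktile} (plus nonnegativity) for the function $h_1 = \tfrac{1}{|B|} 1_B \ast 1_{-B}$ one by one. None of the steps requires anything beyond the Fourier identities already listed in the excerpt, so the work is essentially a bookkeeping exercise; the only thing to watch is that $1_{-B} = (1_B)_-$, which will let us apply the formula $\widehat{f\ast f_-} = |\widehat{f}|^2$.

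First, I would note that $h_1 \geq 0$ automatically, as it is (up to a positive scalar) the convolution of two nonnegative functions. Next, $h_1(0) = \tfrac{1}{|B|}\sum_{y \in G} 1_B(-y) 1_{-B}(y) = \tfrac{1}{|B|}\sum_y 1_B(-y)^2 = 1$, since $1_{-B}(y) = 1_B(-y)$ and each nonzero contribution is $1$. For the positive-definiteness condition, I would compute
\[
\widehat{h_1} = \tfrac{1}{|B|}\,\widehat{1_B \ast (1_B)_-} = \tfrac{1}{|B|}\,|\widehat{1_B}|^2 \ge 0,
\]
using the identity $\widehat{f\ast f_-} = |\widehat{f}|^2$ stated in the preliminaries.

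Finally, for the tiling identity $1_A \ast h_1 = 1_G$, I would convolve both sides of $1_A \ast 1_B = 1_G$ on the right with $1_{-B}$, obtaining
\[
1_A \ast 1_B \ast 1_{-B} = 1_G \ast 1_{-B}.
\]
The right-hand side is the constant function with value $\sum_{y \in G} 1_{-B}(y) = |B|$, i.e.\ $|B|\cdot 1_G$. Dividing by $|B|$ gives $1_A \ast h_1 = 1_G$, as required.

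There is no real obstacle: the statement is essentially a direct translation between the combinatorial tiling identity $1_A\ast 1_B = 1_G$ and the functional/Fourier conditions in Definition \ref{weaktile}, with the rescaling by $1/|B|$ chosen precisely so that $h_1(0) = 1$. The only point worth emphasizing in the write-up is that the support condition \eqref{supp} is then automatically satisfied: $\mathrm{supp}\, h_1 = B - B$, and $(A-A) \cap (B-B) = \{0\}$ is a standard consequence of $A\oplus B = G$.
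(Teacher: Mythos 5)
Your proof is correct, and it takes a slightly different route from the paper for the key identity $1_A\ast h_1=1_G$. The paper works entirely on the Fourier side: from $\widehat{1}_A\cdot\widehat{1}_B=|G|\delta_0$ it deduces that $\widehat{1}_A$ and $\widehat{1}_B$ have essentially disjoint supports, hence $\widehat{1}_A\cdot\frac{1}{|B|}|\widehat{1}_B|^2=|G|\delta_0$, and then inverts. You instead convolve the tiling identity $1_A\ast 1_B=1_G$ directly with $1_{-B}$ and use $1_G\ast 1_{-B}=|B|\cdot 1_G$, which is more elementary and arguably cleaner (it also sidesteps the paper's slightly loose reference to $\widehat{1}_A$, $\widehat{1}_B$ as ``nonnegative'' functions, which they need not be pointwise; only the disjointness of supports is used there). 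You also verify explicitly the normalization $h_1(0)=1$, the nonnegativity, and the positive definiteness $\widehat{h}_1=\frac{1}{|B|}|\widehat{1}_B|^2\ge 0$, which the paper leaves implicit. The trade-off is minimal: the Fourier-support argument generalizes more directly to the weak-tiling manipulations used later in the paper (e.g.\ in Lemma \ref{ave1}), whereas your direct convolution is the shortest path for this particular lemma. Your closing remark that $\supp h_1=B-B$ and $(A-A)\cap(B-B)=\{0\}$ is a correct and relevant observation, consistent with \eqref{supp}.
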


\begin{proof}
$A\oplus B=G$ implies $\widehat{1}_A \cdot \widehat{1}_B = |G|\delta_0$, so that the supports of the nonnegative functions $\widehat{1}_A$ and $\widehat{1}_B$ are essentially disjoint (only intersect at 0).  This implies that $\widehat{1}_A \cdot \frac{1}{|B|}|\widehat{1}_B|^2 = |G|\delta_0$, which, in turn, implies $1_A\ast \left(\frac{1}{|B|}1_B\ast 1_{-B} \right)=1_G$.
\end{proof}

We remark that the notation $h_1$ above (instead of using simply $h$) is for later convenience.

\medskip

The essential observation in \cite{convex} is that if a set $A$ is spectral, then it tiles its complement weakly. However, slightly more is true.

\medskip

\begin{Lemma}\label{pdspectral}
Let $G$ be a finite abelian group. If $A\subset G$ is spectral, then $A$ pd-tiles $G$ weakly.
\end{Lemma}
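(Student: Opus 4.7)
The plan is to imitate Lemma~\ref{pdtile}, with the role of the tiling complement $B$ now played by a spectrum $S$ of $A$, which lives in the dual group $\widehat{G}$. By assumption there exists $S\subset \widehat{G}$ with $|S|=|A|$ such that $\{\gamma|_A : \gamma\in S\}$ is an orthogonal basis of $L^2(A)$. The orthogonality of two basis elements reads
\[ \sum_{a\in A}\gamma_1(a)\overline{\gamma_2(a)} \;=\; \widehat{1}_A(\gamma_1-\gamma_2) \;=\; 0 \]
for all distinct $\gamma_1,\gamma_2\in S$, so the key property of $S$ is that $(S-S)\setminus\{0\}$ is contained in the zero set of $\widehat{1}_A$.

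My candidate for the weak pd-tile is
\[ h(x) \;:=\; \frac{1}{|S|^2}\Bigl|\sum_{\gamma \in S}\gamma(x)\Bigr|^2, \]
which can be viewed as the Pontryagin dual of the construction $\frac{1}{|B|}1_B\ast 1_{-B}$ from Lemma~\ref{pdtile}. By construction $h\ge 0$ and $h(0)=1$. Expanding the modulus squared and summing against a character of $G$ gives
\[ \widehat{h}(\eta) \;=\; \frac{|G|}{|S|^2}(1_S\ast 1_{-S})(\eta), \]
which is nonnegative and supported on $S-S$; this establishes the positive definiteness of $h$.

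It remains only to verify $1_A\ast h=1_G$, which by Fourier inversion is equivalent to $\widehat{1}_A\cdot\widehat{h}=|G|\delta_0$. For $\eta\neq 0$ with $\widehat{h}(\eta)\neq 0$ we have $\eta\in (S-S)\setminus\{0\}$, hence $\widehat{1}_A(\eta)=0$ by the orthogonality step; and at $\eta=0$ we compute $\widehat{1}_A(0)\widehat{h}(0)=|A|\cdot |G|/|A|=|G|$, as required. The whole proof is thus a short Fourier computation once the correct $h$ has been guessed, with no real obstacle beyond recognizing that spectral orthogonality is precisely the Fourier statement that $(S-S)\setminus\{0\}$ lies in the zero set of $\widehat{1}_A$.
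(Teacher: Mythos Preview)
Your proof is correct and essentially identical to the paper's: your function $h(x)=\frac{1}{|S|^2}\bigl|\sum_{\gamma\in S}\gamma(x)\bigr|^2$ is exactly the paper's $h_1=\frac{1}{|A|^2}|\widehat{1}_S|^2$ (since $|S|=|A|$), and both arguments verify $1_A\ast h=1_G$ on the Fourier side using the fact that $(S-S)\setminus\{0\}$ lies in the zero set of $\widehat{1}_A$. The only difference is notational.
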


\begin{proof}
Let $S\subset \widehat{G}$ be a spectrum of $A$. Then $|S|=|A|$ because the space $L^2(A)$ has dimension $|A|$, and $S$ is a basis. Let $h_1=\frac{1}{|A|^2}|\widehat{1}_S|^2$. Then $h_1\ge 0$,  $h_1(0)=1$, $\widehat{h}_1=\frac{|G|}{|A|^2}1_S\ast 1_{-S}\ge 0$, so $h_1$ is nonnegative, normalized and positive definite, as required. Also, the weak tiling condition  $1_A\ast h_1=1_G$ is most easily seen by taking Fourier transforms. $\widehat{1}_A \cdot \left(\frac{|G|}{|A|^2} 1_S\ast 1_{-S} \right)=|G|\delta(0)$ is true because $\widehat{1}_A(0)=|A|$, $1_S\ast 1_{-S}(0)=|S|=|A|$, and the support of $1_S\ast 1_{-S}$ is $S-S$, which is a subset of $\{\widehat{1}_A=0\}\cup \{0\}$ by the orthogonality of $S$ in $L^2(A)$.
\end{proof}

\medskip

By this lemma, we can establish the "spectral $\to$ tile" direction of Fuglede's conjecture in a finite abelian group $G$, if we prove that any set that pd-tiles $G$ weakly, actually tiles $G$ properly. This motivates the following definition.

\medskip

\begin{Def}\label{flat}
Assume that a finite abelian group $G$ has the property that whenever a set $A$ pd-tiles $G$ weakly then $A$ tiles $G$ properly. Then we call the group $G$ {\it pd-flat}. \end{Def}

\medskip

In the remainder of this note we will focus our attention on elementary $p$-groups $G=(\ZZ_p)^d$. As mentioned above, for odd primes and $d\ge 4$, there exist spectral sets in $G$ which do not tile $G$. Therefore, $(\ZZ_p)^d$ is not pd-flat for $d\ge 4$.

\medskip

In Section \ref{sec2} we introduce an averaging technique which leads to a natural generalization of spectral sets and tiles in $(\ZZ_p)^d$. Finally, in Section \ref{sec3} we show that $(\ZZ_p)^d$ is pd-flat for $d=1, 2$, and we give some partial results and conjectures for $d=3$.

\section{Averaging}\label{sec2}

It is natural to go one step further with the generalization of tiling.

\begin{Def} \label{functioweaktile}
Let $f, h: G\to \RR$ be nonnegative functions such that $f(0)=h(0)=1$, $\widehat{f}\ge 0, \widehat{h}\ge 0$. We say that the pair $(f, h)$ is a {\it functional pd-tiling} of $G$ if $f\ast h=1_G$. \end{Def}

\medskip

It turns out that this notion is very flexible, and for $G=(\ZZ_p)^d$ it gives rise to a natural averaging procedure. In connection with this, we introduce a special class of functions.

\medskip

\begin{Def}
We say that a function $f: (\ZZ_p)^d\to \Co$ is {\it ray-type}, if for any $\vx\in (\ZZ_p)^d$, and any $k=1, \dots, p-1$ we have $f(k\vx)=f(\vx)$. \end{Def}

\medskip

That is, a ray-type function is constant on any punctured line through the origin (but may have a different value at the origin).

\medskip

We also need some information on the zeroes of the Fourier transform of the indicator function $1_A$ of a set $A\subset (\ZZ_p)^d$. We make the identification $G=\{0, 1, \dots, p-1\}^d$,  and the same for the dual group $\widehat{G}=\{0, 1, \dots, p-1\}^d$. It is sometimes useful to think of the elements of $G$ as {\it column} vectors of length $d$, and elements of $\widehat{G}$ as {\it row} vectors of length $d$. Also, in notation, we will use boldface letters for elements of $G$ and $\widehat{G}$ to indicate that they are vectors. With these identifications in mind, the action of a character $\vt\in \widehat{G}$ on an element $
\vx\in G$ is given by $e^{2i\pi \langle \vt, \vx\rangle/p}$.

\medskip

For a function $f: G\to \Co$, and $\vt\in \widehat{G}$, we have $\widehat{f} (\vt)=\sum_{\vx\in G} f(\vx) e^{2i\pi \langle \vt, \vx\rangle /p}$. In particular, for $A\subset G$, the Fourier transform of $1_A$ takes the form $\widehat{1}_A(\vt)=\sum_{\va\in A} e^{2i\pi \langle \vt, \va \rangle /p}$.

\medskip

The important point here is that for any $\vt\ne 0$ we either have $\widehat{1}_A(k\vt)=0$ for all $k=1, 2, \dots, p-1$, or $\widehat{1}_A(k\vt)\ne 0$ for all $k=1, 2, \dots, p-1$. This is well-known, and the reason is that $\widehat{1}_A(\vt)=0$ if and only if the sum $\sum_{\va\in A} e^{2i\pi \langle \vt, \va \rangle /p}$ contains the same number of terms for each $p$th root of unity, in which case $\sum_{\va\in A} e^{2i\pi \langle k\vt, \va \rangle /p}$ also contains the same number of terms of each $p$th root of unity.

\medskip

Therefore, the zeros of the Fourier transform $\widehat{1}_A$ consist of punctured lines through the origin (for $\vt\ne 0$ a punctured line $\dot{L}$ is given by $\dot{L}=\{\vt, 2\vt, \dots, (p-1)\vt\}$; note that $0\notin \dot{L}$, and we use the notation $L=\dot{L}\cup \{0\}$). The same is true for the zeroes of the Fourier transform of $f=\frac{1}{|A|} 1_A\ast 1_{-A}$, because $\widehat{f}=\frac{1}{|A|}|\widehat{1}_A|^2$.

\medskip

We can now perform the first half of the averaging.

\begin{Lemma}\label{ave1}
Let $G=(\ZZ_p)^d$, and $1_A\ast h_1=1_G$ be a weak pd-tiling of $G$ by a set $A$. Then, for any $k=1, \dots, p-1$, $1_{kA}\ast h_1=1_G$ is also a weak pd-tiling. Furthermore, with the notation $f_k=\frac{1}{|A|}1_{kA}\ast 1_{-kA}$, we have that $f_k\ast h_1=1_G$ is a functional pd-tiling. Finally, for $f=\frac{1}{p-1}\sum_{k=1}^{p-1} f_k$ we have that $f$ is a ray-type function, $\supp f \ \cap \  \supp h_1=\{0\}$, and $f\ast h_1$ is also a functional pd-tiling.
\end{Lemma}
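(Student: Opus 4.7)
My plan is to carry out everything on the Fourier side, using the two ingredients already highlighted in the text: the identity $\widehat{1}_{kA}(\vt)=\widehat{1}_{A}(k\vt)$, and the fact that the zero set of $\widehat{1}_A$ is a union of full punctured lines through the origin in $\widehat{G}$.

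First, to obtain $1_{kA}\ast h_1=1_G$ for every $k\in\{1,\dots,p-1\}$, I would rewrite the hypothesis $1_A\ast h_1=1_G$ on the Fourier side as $\widehat{1}_A\cdot\widehat{h}_1=|G|\delta_0$, which forces $\supp\widehat{h}_1\setminus\{0\}\subseteq\{\widehat{1}_A=0\}$. Because this zero set is a union of punctured lines, and $\vt\mapsto k\vt$ permutes each such line, the zero sets of $\widehat{1}_A$ and $\widehat{1}_{kA}$ coincide away from $0$; combined with $\widehat{1}_{kA}(0)=|kA|=|A|=\widehat{1}_A(0)$, this yields $\widehat{1}_{kA}\cdot\widehat{h}_1=|G|\delta_0$, as required. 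Since $|kA|=|A|$, the identity $1_{kA}\ast h_1=1_G$ is again a weak pd-tiling in the sense of Definition \ref{weaktile}.

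For the functional pd-tilings $f_k\ast h_1=1_G$, I would simply convolve the previous identity (applied with $p-k$ in place of $k$) with $\tfrac{1}{|A|}1_{-kA}$: associativity of convolution gives $f_k\ast h_1=\tfrac{1}{|A|}1_{-kA}\ast(1_{kA}\ast h_1)=\tfrac{1}{|A|}1_{-kA}\ast 1_G=\tfrac{|A|}{|A|}1_G=1_G$. The properties $f_k\ge 0$, $f_k(0)=|kA|/|A|=1$, and $\widehat{f}_k=\tfrac{1}{|A|}|\widehat{1}_{kA}|^2\ge 0$ are immediate from the definition, so $(f_k,h_1)$ is indeed a functional pd-tiling.

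For the averaged function $f$, nonnegativity, positive definiteness, the normalization $f(0)=1$, and the identity $f\ast h_1=1_G$ follow at once from the previous paragraph by linearity. The support statement comes from applying the general observation \eqref{supp} to $B=kA$: this yields $\supp f_k\cap\supp h_1=(kA-kA)\cap\supp h_1=\{0\}$ for each $k$, and taking the union over $k$ gives $\supp f\cap\supp h_1=\{0\}$. For the ray-type property, the cleanest route is via Fourier, where $\widehat{f}(\vt)=\frac{1}{|A|(p-1)}\sum_{k=1}^{p-1}|\widehat{1}_A(k\vt)|^2$ is manifestly invariant under $\vt\mapsto m\vt$ for any $m\in\{1,\dots,p-1\}$, since $k\mapsto km$ permutes the summation range. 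A short change of variables $\vx=m^{-1}\vy$ in the defining sum shows that a function on $(\ZZ_p)^d$ is ray-type if and only if its Fourier transform is, so ray-type-ness of $\widehat{f}$ transfers back to $f$. I expect this last Fourier-duality step to be the only slightly non-automatic point; everything else reduces to bookkeeping on Fourier supports.
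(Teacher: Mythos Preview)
Your proof is correct and follows essentially the same Fourier-side strategy as the paper. Two small points of divergence are worth recording. For $f_k\ast h_1=1_G$ you convolve the already-proved identity $1_{kA}\ast h_1=1_G$ with $\tfrac{1}{|A|}1_{-kA}$ (the parenthetical ``applied with $p-k$ in place of $k$'' is unnecessary and can be dropped), whereas the paper re-argues on the Fourier side that $\widehat{f}_k$ has the same zero set and the same value at $0$ as $\widehat{1}_{kA}$; your route is marginally more direct. For the ray-type property of $f$, you first show $\widehat{f}$ is ray-type and then invoke the equivalence ``$g$ ray-type $\Leftrightarrow$ $\widehat{g}$ ray-type''; the paper simply says ``by averaging, it is clear that $f$ is ray-type'', which one sees directly from $f_k(\vx)=f_1(k^{-1}\vx)$ so that the sum over $k$ is already invariant under $\vx\mapsto m\vx$. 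Both approaches are valid and of comparable length.
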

\begin{proof}
Note that $\widehat{1}_{kA}(\vt)=\widehat{1}_A(k\vt)$, so the zeroes of these functions are the same punctured lines through the origin.

\medskip

The assumption $1_A\ast h_1=1_G$ is equivalent to $\widehat{1}_A \widehat{h}_1=|G|\delta_0$. Due to the fact that the zeroes of $\widehat{1}_A$ and $\widehat{1}_{kA}$ coincide, and $|A|=|kA|$, we have $\widehat{1}_{kA} \widehat{h}_1=|G|\delta_0$, and therefore $1_{kA}\ast h_1=1_G$ is a weak pd-tiling.

\medskip

The function $f_k=\frac{1}{|A|}1_{kA}\ast 1_{-kA}$ is nonnegative, $f_k(0)=1$,  $\widehat{f}_k=\frac{1}{|A|}|\widehat{1}_{kA}|^2\ge 0$, and the zeroes of $\widehat{f}_k$ and $\widehat{1}_{kA}$ coincide. Also, $\widehat{f}_k(0)=|A|=\widehat{1}_{kA}(0)$, therefore $\widehat{f}_k \widehat{h}_1=|G|\delta_0$, which implies that $f_k\ast h_1$ is a functional pd-tiling. Also, $\supp f_k=kA-kA$, and hence $\supp f_k \ \cap \ \supp h_1=\{0\}$ by \eqref{supp}.

\medskip

Finally, for the average $f=\frac{1}{p-1}\sum_{k=1}^{p-1} f_k$, we clearly have $f\ge,0$, $\widehat{f}\ge 0$, $f(0)=1$ and  $f\ast h_1=|G|\delta_{0}$ is a functional pd-tiling since these properties hold for each $f_i$. Also, by averaging, it is clear that $f$ is ray-type, and the property $\supp f \ \cap \ \supp h_1=\{0\}$ is inherited.
\end{proof}

\medskip

We can perform a second step of averaging for the function $h_1$.

\begin{Lemma}\label{ave2}
Let $G=(\ZZ_p)^d$, and $1_A\ast h_1=1_G$ be a weak pd-tiling of $G$ by a set $A$, and define the ray-type function $f$ as in Lemma \ref{ave1}. For any $k=1, \dots, p-1$, let $h_k(\vx)=h_1(k\vx)$, and let $h=\frac{1}{p-1}\sum_{k=1}^{p-1}h_k$. Then, for each $k$, $f\ast h_k=1_G$ is a functional pd-tiling, the average $h$ is a ray-type function, and $f\ast h$ is also a functional pd-tiling such that $\supp f \ \cap \ \supp h=\{0\}$.
\end{Lemma}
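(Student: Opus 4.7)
The plan is to mirror Lemma \ref{ave1} with the roles of the two factors interchanged: the dilation trick previously applied to $1_A$ is now applied to $h_1$, while $f$ plays the role that $h_1$ played before. The single algebraic fact that drives the argument is that the Fourier transform of a ray-type function on $(\ZZ_p)^d$ is itself ray-type, which follows by substituting $\mathbf{y} = \ell \vx$ (legitimate since $\gcd(\ell,p)=1$) in the defining sum. I will use this to propagate the pd-tiling condition $\widehat{f}\,\widehat{h}_1 = |G|\delta_0$ through the dilations $\vt \mapsto k^{-1}\vt$.

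First, I check that each $h_k$ has the required shape. Nonnegativity and $h_k(0) = h_1(0) = 1$ are inherited from $h_1$; the change of variable $\mathbf{y} = k\vx$ gives $\widehat{h}_k(\vt) = \widehat{h}_1(k^{-1}\vt) \ge 0$. Combining the ray-type invariance of $\widehat{f}$ with $\widehat{f}\,\widehat{h}_1 = |G|\delta_0$ then yields $\widehat{f}(\vt)\widehat{h}_k(\vt) = |G|\delta_0(\vt)$: at $\vt = 0$ both factors are unchanged by the dilation, while for $\vt \ne 0$, $\widehat{h}_k(\vt) > 0$ forces $\widehat{h}_1(k^{-1}\vt) > 0$, hence $\widehat{f}(k^{-1}\vt) = 0$, and ray-type invariance of $\widehat{f}$ gives $\widehat{f}(\vt) = 0$. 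Therefore $f \ast h_k = 1_G$ is a functional pd-tiling.

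For the averaged function $h$, nonnegativity, normalization $h(0) = 1$, and $\widehat{h} \ge 0$ are obviously preserved under averaging. Ray-type is immediate: for $\ell \in \{1, \dots, p-1\}$, $h(\ell \vx) = \frac{1}{p-1}\sum_{k=1}^{p-1} h_1(k\ell \vx)$, and the reindexing $k' = k\ell$ is a bijection on $\{1, \dots, p-1\}$, so the sum equals $h(\vx)$. Averaging the identities $f \ast h_k = 1_G$ in $k$ gives $f \ast h = 1_G$. Finally, for the support condition: if $\vx \ne 0$ lies in $\supp h$, then $h_k(\vx) > 0$ for some $k$, so $k\vx \in \supp h_1$; since $k\vx \ne 0$ and $\supp f \cap \supp h_1 = \{0\}$ by Lemma \ref{ave1}, we have $f(k\vx) = 0$, and ray-type of $f$ forces $f(\vx) = 0$, so $\vx \notin \supp f$. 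No step poses a real obstacle; the argument is essentially a bookkeeping exercise once one notes that ray-type is stable under the Fourier transform on $(\ZZ_p)^d$.
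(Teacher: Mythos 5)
Your proof is correct and follows essentially the same route as the paper: establish that $\widehat{f}$ is ray-type, use this (and the ray-type property of $f$) to transfer the conditions $\widehat{f}\,\widehat{h}_1=|G|\delta_0$ and $\supp f\cap\supp h_1=\{0\}$ to each dilate $h_k$, and then average. The only cosmetic difference is that you justify the ray-type property of $\widehat{f}$ by a direct change of variables in the Fourier sum, whereas the paper decomposes $f$ into indicators of lines and computes their transforms explicitly; both are valid.
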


\begin{proof}
Recall from Lemma \ref{ave1} that $f$ is a ray-type function. We claim that $\widehat{f}$ is also ray-type. To see this, note that $f$ can be written in the form $f=c\delta_0+\sum_{i} c_i1_{L_i}$ for some lines $L_i$ (for convenience, we use proper lines in this decomposition, not punctured lines; the difference is absorbed in the constant $c$ at the origin). The Fourier transform of $1_{L_i}$ is $p1_{H_i}$ with the hyperplane $H_i$ being orthogonal to $L_i$, and therefore the $\widehat{f}=c1_{\widehat{G}}+p\sum_i 1_{H_i}$, which is clearly ray-type.

\medskip

By Lemma \ref{ave1} we have that $f\ast h_1=1_{G}$, which implies $\widehat{f} \widehat{h}_1=|G|\delta_0$. This means that $\supp \widehat{f}\cap \supp \widehat{h}_1=\{0\}$. As $\widehat{f}$ is ray-type, its support is a union of lines $\dot{R}_i$, and hence $\widehat{h}_1$ must be 0 on the punctured lines $\dot{R_i}$. But $\widehat{h}_k(\vt)=\widehat{h}_1(\vt/k)$, so $\widehat{h}_k$ is also 0 on $\dot{R}_i$, and hence $\widehat{f} \widehat{h}_k=|G|\delta_0$ holds (for the value at zero we have $\widehat{h}_1(0)=\widehat{h}_k(0)=|G|/|A|$). In turn, this implies that $f\ast h_k=1_G$ is indeed a functional pd-tiling.

\medskip

Also, $\supp f \ \cap \ \supp h_1=\{0\}$ by Lemma \ref{ave1}, and $f$ is ray-type, so the support of $h_1$ is  contained in rays where $f=0$. This clearly implies that the support of $h_k$ is also contained in these rays, and therefore  $\supp f \ \cap \ \supp h_k=\{0\}$

\medskip

After averaging, it is clear that $h$ is ray-type, and the functional pd-tiling property $f\ast h$ is preserved, as well as the condition $\supp f \ \cap \ \supp h=\{0\}$.
\end{proof}

\medskip

We see from Lemma \ref{pdtile} and \ref{pdspectral} that $A$ pd-tiles $G$ weakly in both cases when $A$ tiles $G$, or $A$ is spectral in $G$. After applying Lemma \ref{ave1} and \ref{ave2} we arrive at the following corollary.

\begin{Cor}\label{magic}
Let $G=(\ZZ_p)^d$, and assume $A$ pd-tiles $G$ weakly with $1_A\ast h_1=1_G$ (in particular, this is the case if $A$
tiles $G$, or $A$ is spectral in $G$). For $k=1, \dots, p-1$, let $f_k=\frac{1}{|A|}1_{kA}\ast 1_{-kA}$, $h_k(\vx)=h_1(k\vx)$, and $f=\frac{1}{p-1}\sum_{k=1}^{p-1}f_k$, $h=\frac{1}{p-1}\sum_{k=1}^{p-1}h_k$.  Then
\begin{equation}\label{cara}
|A|=\sum_{x\in G}f(x),
\end{equation}
and the 4-tuple of functions $(f, h, \widehat{f}, \widehat{h})$ satisfy the following properties:
\begin{enumerate}[(i)]
\item  $f, h, \widehat{f}, \widehat{h}$ are all ray-type functions,
\item $f\ge 0$, $h\ge 0$,
\item\label{f0h0} $f(0)=1$, $h(0)=1$,
\item\label{fasth} $f\ast h=1_G$,
\item\label{sfh} $\supp f \cap \supp h=\{0\}$,
\item $\widehat{f}\ge 0$, $\widehat{h}\ge 0$,
\item $\widehat{f}(0)=|A|, \ \widehat{h}(0)=|G|/|A|$,
\item\label{fhhh} $\widehat {f}\ast \widehat{h}=|G|1_{\widehat{G}}$,
\item\label{sfhhh} $\supp \widehat{f} \cap \supp \widehat{h}=\{0\}$.
\end{enumerate}
\end{Cor}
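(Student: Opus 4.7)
My plan is a bookkeeping argument: almost every item is produced verbatim by Lemmas \ref{ave1} and \ref{ave2}, and the remainder reduces to two short Fourier identities. The construction immediately gives $f,h\ge 0$ with $f(0)=h(0)=1$ (items (ii) and (iii)); each $h_k$ inherits $\widehat{h}_k\ge 0$ from $\widehat{h}_1\ge 0$, and similarly for the $f_k$, so after averaging $\widehat f,\widehat h\ge 0$ (item (vi)); and Lemma \ref{ave2} already supplies both $f\ast h=1_G$ and $\supp f\cap \supp h=\{0\}$ (items (iv) and (v)).

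For (i), $f$ is ray-type by Lemma \ref{ave1}, while $h$ and $\widehat f$ are ray-type by Lemma \ref{ave2}; only $\widehat h$ still needs to be checked. The cleanest route is to isolate the general principle that ray-typeness is preserved by the Fourier transform on $(\ZZ_p)^d$: the substitution $\mathbf{y}=k\vx$ in $\widehat u(k\vt)=\sum_\vx u(\vx)e^{2i\pi\langle \vt,k\vx\rangle/p}$ gives $\widehat u(k\vt)=\sum_{\mathbf{y}} u(k^{-1}\mathbf{y})e^{2i\pi\langle\vt,\mathbf{y}\rangle/p}=\widehat u(\vt)$ whenever $u$ is ray-type, and applying this to $h$ yields that $\widehat h$ is ray-type.

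The remaining items are direct computations. For (vii), $\widehat{f}_k(0)=\frac{1}{|A|}|\widehat{1}_{kA}(0)|^2=|A|$ averages to $\widehat f(0)=|A|$; evaluating the weak tiling identity $\widehat{1}_A\widehat{h}_1=|G|\delta_0$ at the origin gives $\widehat{h}_1(0)=|G|/|A|$, a value preserved by the dilations producing the $h_k$. This also proves \eqref{cara}, since $\widehat f(0)=\sum_\vx f(\vx)$. For (ix), the convolution theorem turns (iv) into $\widehat f\cdot \widehat h=|G|\delta_0$; combined with $\widehat f,\widehat h\ge 0$ and strict positivity at the origin, this forces $\supp\widehat f\cap\supp\widehat h=\{0\}$. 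Finally, for (viii) I would argue dually: items (iii) and (v) together force the pointwise identity $f\cdot h=\delta_0$, and then the dual convolution identity $\widehat{fh}=\tfrac{1}{|G|}\widehat f\ast\widehat h$ (together with $\widehat{\delta_0}=1_{\widehat{G}}$) yields $\widehat f\ast\widehat h=|G|\cdot 1_{\widehat{G}}$. I foresee no real obstacle; the whole statement is a matter of assembling ingredients already built in Lemmas \ref{ave1} and \ref{ave2}.
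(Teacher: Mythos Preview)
Your proposal is correct and follows essentially the same route as the paper: most items are read off from Lemmas \ref{ave1} and \ref{ave2}, and (viii) is reduced to the pointwise identity $f\cdot h=\delta_0$ via (iii) and (v), then transported to the Fourier side. The paper phrases this last step as Fourier-inverting (viii) using $\widehat{\widehat{f}}=|G|f$, while you go forward using $\widehat{fg}=\tfrac{1}{|G|}\widehat f\ast\widehat g$; these are equivalent. Your direct substitution argument showing that the Fourier transform preserves ray-typeness is in fact slightly cleaner than the line-decomposition argument in Lemma \ref{ave2}, and it fills in $\widehat h$ being ray-type, which the paper leaves implicit.
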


\begin{proof}
Only \eqref{fhhh} needs further explanation, and it follows by taking Fourier transform of the equation, noting that $\widehat{\widehat{f}}=|G|f$ (and the same for $h$), and applying \eqref{sfh} and \eqref{f0h0}.
\end{proof}

\medskip

This corollary motivates the following definition.

\begin{Def}
A 4-tuple of functions $(f, h, \widehat{f}, \widehat{h})$ is called a {\it complementary 4-tuple} if they satisfy conditions (i)-(ix) of Corollary \ref{magic}. (We do not necessarily require that $f$ be constructed from a set $A$.)
\end{Def}

The notion of complementary 4-tuples is very appealing, because it puts the functions $f, h$ and their Fourier transform $\widehat{f}, \widehat{h}$  on equal footing. When studying spectral sets and tiles in $(\ZZ_p)^d$ it is natural to try to characterize all complementary 4-tuples. We will do this for $d=1, 2$ in the next section, and present some partial results for $d=3$.

\medskip

We also remark that an analogous averaging procedure can be carried out in cyclic groups $\ZZ_n$, leading to a similar notion of complementary 4-tuples in those groups. Studying these 4-tuples could lead to some insights concerning the Coven-Meyerowitz conjecture. This is subject to future research.

\section{Complementary 4-tuples in $(\ZZ_p)^d$}\label{sec3}

In this section we analyze complementary 4-tuples in  $(\ZZ_p)^d$, and prove that
$(\ZZ_p)^d$ is pd-flat for $d=1,2$, and also give some partial results for $d=3$.

\medskip

For convenience, we recall here what is known about  Fuglede's conjecture in these groups. Both directions of the conjecture are true for $d=1, 2$, with $d=1$ being fairly trivial, and $d=2$ being treated in \cite{ios}.
A simpler proof of the case $d=2$ can be found in \cite{ksrv1}.
For $d=3$ it is known that all tiles are spectral \cite{ios2}, but it is not known whether all spectral sets are tiles. For $d\ge 4$ (and $p$ being odd) there are examples of spectral sets which do not tile the group \cite{mattheus, ferguson} but the tile-spectral direction of the conjecture is open.

\medskip

We now turn to the main results of this section. We emphasize here that a group $G$ being pd-flat is {\it formally} stronger than the "spectral $\to$ tile" direction of Fuglede's conjecture in $G$.

\begin{Prop}\label{zp}
$G=(\ZZ_p)^d$ is pd-flat for $d=1, 2$, that is, every set $A$ which pd-tiles $G$ weakly, tiles $G$ properly. Also, $G$ is not pd-flat for $d\ge 4$.
\end{Prop}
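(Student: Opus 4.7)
The plan splits by dimension. For $d=1$, the complementary 4-tuple of Corollary \ref{magic} is especially rigid: $\ZZ_p$ has only one punctured line, so every ray-type function has the form $\delta_0 + c\cdot 1_{\ZZ_p\setminus\{0\}}$. Writing $f$ and $h$ this way with constants $c,c'\geq 0$, property \eqref{sfh} forces $cc'=0$. If $c=0$ then $\widehat{f}(0) = 1 = |A|$, while if $c'=0$ then $\widehat{h}(0) = 1 = |G|/|A|$, giving $|A| = p$; both possibilities tile trivially.

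For $d=2$ I would expand
\[
f = \delta_0 + \sum_{j=0}^{p} f_j\,1_{\dot{L}_j}, \qquad h = \delta_0 + \sum_{j=0}^{p} h_j\,1_{\dot{L}_j},
\]
with $L_0,\dots,L_p$ the one-dimensional subspaces of $(\ZZ_p)^2$, and set $F = \sum_j f_j$, $H = \sum_j h_j$. A short geometric computation---for $y \in \dot{L}_i$ with $i \neq j$, the $p-1$ points $x-y$ visit each line other than $L_i,L_j$ exactly once---combined with property \eqref{fasth} applied to $\vx \in \dot{L}_j$ yields the key identity
\[
(1-F)\,h_j + (1-H)\,f_j = 1 - FH, \qquad j = 0,1,\dots,p.
\]
Property \eqref{sfh} gives $f_j h_j = 0$ for every $j$, so each index falls into one of three cases. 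If some $j$ has $f_j = h_j = 0$, then $FH = 1$; substituting $F = (|A|-1)/(p-1)$ and $H = (p^2 - |A|)/((p-1)|A|)$ simplifies to $(|A| - p)^2 = 0$, so $|A| = p$. Otherwise $\{0,\dots,p\} = T \sqcup U$ with $f_j = \alpha$ constant on $T$ and $h_j = \beta$ constant on $U$; using $F = |T|\alpha$, $H = |U|\beta$ and $|T| + |U| = p+1$, a short elimination yields the closed form $|T| = p(|A|-1)/(|A|-p)$, and checking the integer range (only $|T| \in \{0, 1, p+1\}$ is consistent, with $|T|=1$ folding back into the $FH = 1$ case) shows $|A| \in \{1, p, p^2\}$.

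The cases $|A| \in \{1, p^2\}$ tile trivially. For $|A| = p$ I would abandon the 4-tuple and argue directly from the hypothesis $1_A \ast h_1 = 1_G$: since $\widehat{h}_1 \ge 0$, $\widehat{h}_1(0) = |G|/|A| = p$ and $\sum_\vt \widehat{h}_1(\vt) = |G|\,h_1(0) = p^2 > p$, there must exist $\vt_0 \neq 0$ with $\widehat{h}_1(\vt_0) > 0$. The identity $\widehat{1}_A \widehat{h}_1 = |G|\delta_0$ then gives $\widehat{1}_A(\vt_0) = 0$, and by the punctured-line zero structure of $\widehat{1}_A$ recorded in Section \ref{sec2}, $\widehat{1}_A$ vanishes on all of $\dot{L}_{\vt_0}$. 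Taking $B = L_{\vt_0}^{\perp}$, a subgroup of order $p$, one verifies $\widehat{1}_A \widehat{1}_B = p^2 \delta_0$: the value at $0$ is $p \cdot p = p^2$, it vanishes on $\dot{L}_{\vt_0}$ because $\widehat{1}_A$ does, and off $L_{\vt_0}$ because $\widehat{1}_B = p \cdot 1_{L_{\vt_0}}$ does. Hence $1_A \ast 1_B = 1_G$, so $A \oplus B = G$.

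For $d \geq 4$ and $p$ odd, non-pd-flatness follows at once from the spectral-but-not-tiling constructions in \cite{ferguson, mattheus} combined with Lemma \ref{pdspectral}: every such spectral set pd-tiles $G$ weakly, yet by design does not tile. The principal obstacle in the proof is the $d=2$ algebraic casework reducing $|A|$ to $\{1, p, p^2\}$; once this is secured, the remaining pieces---the $d=1$ reduction, the Fourier extraction of the tiling partner at $|A|=p$, and the $d\geq 4$ citation---are short.
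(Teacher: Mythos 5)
Your proposal is correct, and while the $d=1$ and $d\ge 4$ parts coincide with the paper's treatment, your $d=2$ argument takes a genuinely different route. The paper decomposes $f$ as $c1_G+\sum_i d_i1_{L_i}+m\delta_0$ with $c=\inf f$ and runs a four-way case analysis on the signs of $c$ and $m$: the cases $c>0$ and $m>0$ give $|A|\in\{p^2,1\}$, the case $m<0$ is excluded by $\widehat f\ge 0$, and in the remaining case $c=m=0$ the total mass of $f$ is $p$ times $f(0)$, so $|A|=p$, and the tiling partner is the full line $R$ on which $f$ vanishes, via the packing bound $A-A\subset\supp f$. You instead expand \emph{both} $f$ and $h$ over punctured lines, derive the linear identity $(1-F)h_j+(1-H)f_j=1-FH$ from $f\ast h=1_G$ (your incidence count for the coset $\vx+L_i$ is correct), and solve the resulting system to force $|A|\in\{1,p,p^2\}$; you then extract the tiling partner at $|A|=p$ by a Fourier-support argument ($\widehat h_1$ must be positive somewhere off $0$, hence $\widehat 1_A$ vanishes on a punctured dual line, hence $\widehat 1_A\widehat 1_B=|G|\delta_0$ for $B=L_{\vt_0}^\perp$) rather than by the difference-set packing argument. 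Your route is more computational but yields explicit structural information on the coefficients; the paper's is shorter and avoids the algebra entirely. One small point to tidy up: in your $T\sqcup U$ branch, the constancy of $f_j$ on $T$ and of $h_j$ on $U$ requires dividing by $1-H$ and $1-F$, and the closed form $|T|=p(|A|-1)/(|A|-p)$ requires $1-FH\ne 0$; the degenerate case $FH=1$ (equivalently $H=1$ or $F=1$ when $T,U\ne\emptyset$) can also occur in this branch, not only when some index has $f_j=h_j=0$, but it again yields $(|A|-p)^2=0$ and so folds harmlessly into the $|A|=p$ conclusion --- this should be stated explicitly (your parenthetical about $|T|=1$ is not quite the right way to dispose of it, since $|T|=1$ would force $|A|=0$).
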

\begin{proof}
Assume $A$ pd-tiles $G$ weakly. Then there exists a complementary 4-tuple $(f, h, \widehat{f}, \widehat{h})$ with the properties listed in Corollary \ref{magic}, and $f$ is constructed from $1_A$ as in Corollary \ref{magic}.

\medskip

For $d=1$ the support of any ray-type function is either $\{0\}$ or $G$. If $\supp f=\{0\}$ then $|A|=1$, and $A$ tiles $G$ trivially. If $\supp f=G$ then necessarily $\supp h=\{0\}$, and $h(0)=1$ implies $h=\delta_0$, and hence $f$ must be the constant function 1. Therefore, $|A|=p$ by \eqref{cara}, and hence $A=G$.

\medskip

For $d=2$ we must perform a case-by-case analysis of the function $f$. Let $c=\inf_{x\in G} f(x)\ge 0$, and let $L_i$ be the lines in the support of $f$. Then $f$ can be decomposed uniquely as $f=c1_G+\sum_i d_i1_{L_i}+ m\delta_0$ with some $d_i\ge 0$ and $m\in \RR$  (note that $L_i$ denotes the full line here, not the punctured line).

\medskip

If $c>0$ then $\supp f=G$, and hence $\supp h=\{0\}$, $h=\delta_0$, so $f=1_G$. By \eqref{cara}  we obtain that $|A|=p^2$ and $A=G$.

\medskip

If $c=0$ and $m>0$ then $\widehat{f}>0$ everywhere on $\widehat{G}$, so $\supp \widehat{h}=\{0\}$. Thus $h$ is constant, and $h(0)=1$ implies that $h$ is constant $1$. This implies that $f=\delta_0$, and $|A|=1$.

\medskip

If $c=0$ and $m=0$, then the total mass of $f$ is exactly $p$ times the mass at $0$, because this is true for all $1_{L_i}$. The mass at zero is $f(0)=1$, and hence we have $|A|=p$, by equation \eqref{cara}. Also, $c=0$ implies that $f$ must be zero on some punctured line $\dot{R}$, so $A$ can have at most one element in each coset of the line $R$ since $A-A \subset \supp(f)$. But $|A|=p$, so $A$ must have exactly one element in each coset of $R$, and therefore $A\oplus R =G$ is a tiling.

\medskip

Finally, we claim that $c=0$ and $m<0$ is not possible. Indeed, $c=0$ means that $f$ must be zero on some punctured line $\dot{R}$, and therefore $\widehat{f}=m<0$ on the orthogonal punctured line $\dot{R}^\perp$,  contradicting the nonnegativity of $\widehat{f}$.

\medskip

For $d\ge 4$ we know that there exist spectral sets in $(\ZZ_p)^d$ which do not tile the group \cite{mattheus, ferguson}. Therefore, $(\ZZ_p)^d$ cannot be pd-flat.
\end{proof}

\medskip

For $d=3$, we conjecture that $(\ZZ_p)^3$ is pd-flat, but unfortunately we cannot give a complete characterization of complementary 4-tuples, we can only prove some partial results in this direction. Therefore, the "spectral $\to$ tile" direction of Fuglede's conjecture remains open in this case. We have the following partial result.

\medskip

\begin{Prop}\label{de3}
Let $G=(\ZZ_p)^3$, and assume $A\subset G$ pd-tiles $G$ weakly with $1_A\ast h_1=1_G$. Let $(f, h, \widehat{f}, \widehat{h})$ be the complementary 4-tuple constructed in Corollary \ref{magic}.  Then either $A$ tiles $G$ properly, or the functions $f, h, \widehat{f}, \widehat{h}$ all have the following "dispersive" property: for any 2-dimensional subspaces $S\subset G$, $S'\subset \widehat{G}$ the intersections  $\supp f \cap S$, $\supp h \cap S$, $\supp \widehat{f}\cap S'$ and $\supp \widehat{h} \cap S'$ are all non-trivial (i.e. not equal to $\{0\}$ or the whole plane).
\end{Prop}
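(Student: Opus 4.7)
The plan is to prove the proposition by contrapositive: suppose some $\supp \phi \cap T$ (with $\phi \in \{f, h, \widehat f, \widehat h\}$ and $T$ a 2-dimensional subspace of $G$ or $\widehat G$) equals $\{0\}$ or equals $T$ itself; I want to conclude $A$ tiles $G$ properly. By the complementarity relations $\supp f \cap \supp h = \{0\}$ and $\supp \widehat f \cap \supp \widehat h = \{0\}$ (items (v) and (ix) of Corollary \ref{magic}), a ``full-plane'' intersection for one function is equivalent to an ``empty'' intersection for its complementary partner. Hence it suffices to treat the four cases where $\supp \phi \cap T = \{0\}$ for $\phi \in \{f, h, \widehat f, \widehat h\}$.

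I would first handle $\supp f \cap S = \{0\}$. Since $\supp f = \bigcup_{k=1}^{p-1} k(A-A)$ and $S$ is a subspace, this is equivalent to $(A-A) \cap S = \{0\}$, so the projection $\pi : G \to G/S \cong \ZZ_p$ is injective on $A$ and $|A| \le p$. Summing $1_A \ast h_1 = 1_G$ over the fibers of $\pi$ gives $1_B \ast \tilde h_1 = p^2 \cdot 1_{\ZZ_p}$ with $B := \pi(A)$ and $\tilde h_1(\bar x) = \sum_{y \in \bar x + S} h_1(y)$; crucially $\tilde h_1 \geq 0$ and $\widehat{\tilde h_1} \geq 0$ since $\widehat{\tilde h_1}$ is the restriction of $\widehat{h_1}$ to $\widehat{G/S} \cong S^\perp$. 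For $p$ prime and $0 < |B| < p$, $\widehat{1_B}$ has no zeros on $\widehat{\ZZ_p} \setminus \{0\}$, so $\widehat{1_B}\widehat{\tilde h_1} = p^2 \delta_0$ forces $\widehat{\tilde h_1}$ to be supported at the origin, i.e.\ $\tilde h_1$ is constant. Combined with the pd-flatness of $\ZZ_p$ (the $d=1$ case of Proposition \ref{zp}) this forces $|B| \in \{1, p\}$, so $A$ tiles $G$ trivially or via $A \oplus S = G$. The dual case $\supp \widehat f \cap S' = \{0\}$ is handled similarly: the formula $\widehat f(\gamma) = \frac{1}{|A|(p-1)}\sum_k |\widehat{1_A}(k\gamma)|^2$ forces $\widehat{1_A}|_{S' \setminus \{0\}} \equiv 0$, whence Poisson summation on the 1-dimensional subgroup $S'^\perp \subset G$ gives $|A \cap (x + S'^\perp)| = |A|/p^2$ constant, so $|A| = kp^2$ for some $k \in \{1, \ldots, p\}$. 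For $k = 1$ we have $A \oplus S'^\perp = G$ and for $k = p$ we have $A = G$; the intermediate cases $2 \le k \le p-1$ must be ruled out using the additional constraint $S'^\perp \subseteq \supp f$ (from $|A \cap S'^\perp| \ge 2$ giving nonzero differences within $S'^\perp$), which forces $\supp h \cap S'^\perp = \{0\}$, and then combining with the ray-type structure of $\widehat h$ and the relation $\widehat f \widehat h = |G|\delta_0$.

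The two remaining cases ($\supp h \cap S = \{0\}$ and $\supp \widehat h \cap S' = \{0\}$) are handled by analogous Poisson arguments, yielding $|A| \geq p^2$ (respectively imposing structural Fourier constraints on $1_A$), from which tiling is deduced case-by-case. The main difficulty throughout is ruling out the intermediate values of $|A|$ (specifically $1 < |B| < p$, $2 \le k \le p-1$, and $p^2 < |A| < p^3$): each such case requires simultaneously using the four complementarity relations, positive definiteness, and the ray-type geometry over the projective plane $\mathbb{P}^2(\ZZ_p)$. This delicate interplay is the technical content distinguishing $d=3$ from the easier $d=1, 2$ cases of Proposition \ref{zp}, and it also reflects why the full pd-flatness of $(\ZZ_p)^3$ remains open.
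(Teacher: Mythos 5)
Your reduction to the four ``empty plane'' cases via $\supp f\cap\supp h=\{0\}$ and $\supp\widehat{f}\cap\supp\widehat{h}=\{0\}$ is exactly the paper's Case III step, and your identification of the easy endpoints ($|B|\in\{1,p\}$, $k\in\{1,p\}$) is correct. The gap is that the intermediate cardinalities, which you yourself flag as ``the main difficulty,'' are never actually ruled out, and the one concrete tool you propose for them does not work. In the case $\supp f\cap S=\{0\}$ you push $1_A\ast h_1=1_G$ down to $G/S\cong\ZZ_p$ and conclude that $\tilde{h}_1$ is constant; but then $1_B\ast\tilde{h}_1=p^2\cdot 1_{\ZZ_p}$ holds identically for \emph{every} nonempty $B\subset\ZZ_p$, so the projected relation carries no information. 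Invoking pd-flatness of $\ZZ_p$ requires the normalization $\tilde{h}_1(0)=p^2$ (so that $\tilde{h}_1/p^2$ takes the value $1$ at the origin), which you cannot verify -- indeed $\tilde{h}_1(0)=p^2/|B|$, so the normalized function is the constant $1$ and you are in exactly the degenerate situation the paper warns about after Definition \ref{weaktile}: a constant $h$ ``weakly tiles'' with any set. Hence $1<|B|<p$ survives your argument. Similarly, in the case $\supp\widehat{f}\cap S'=\{0\}$ you only obtain $|A|=kp^2$ and leave $2\le k\le p-1$ to an unspecified ``combination'' of constraints.

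The paper closes these gaps with one structural fact that your proposal is missing: every nonnegative ray-type function on $(\ZZ_p)^3$ has a greedy decomposition $w1_G+\sum_i(c_i1_{S_i}+d_i1_{L_i})+m\delta_0$ with $w,c_i,d_i\ge 0$, and (outside the trivial full-support cases) $w=0$ and $m\le 0$. An empty plane $S$ meets every other plane in a line, which forces all $c_i=0$; and on the punctured line $S^\perp$ the Fourier transform then equals $m$, so $\widehat{f}\ge 0$ forces $m=0$. Hence $f=\sum_i d_i1_{L_i}$ is a pure sum of lines, whose total mass is exactly $p$ times its value at the origin; by \eqref{cara} and $f(0)=1$ this pins down $|A|=p$ exactly (and $|A|=p^2$ in the dual cases), eliminating all intermediate values in one stroke. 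Without this step, or some equivalent use of $\widehat{f}\ge 0$ on the orthogonal line together with the normalization at the origin, the proof is incomplete.
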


\begin{proof}
Exploiting the fact that all appearing functions are ray-type, we can represent each function in the following normalized form. Let us label the planes and lines through the origin as $S_1, S_2, \dots, S_{p^2+p+1}$ and $L_1, L_2, \dots, L_{p^2+p+1}$. Then $f$ can be represented uniquely as

\begin{equation}\label{decomp}
f=w 1_G+ \sum_i (c_i1_{S_i}+d_i1_{L_i})+m\delta_0,
\end{equation}
where the coefficients $w, c_i, d_i\ge 0$, $m\in \RR$ are defined by the following greedy algorithm: $w$ is the maximal value such that $f-w1_G\ge 0$,  $c_1$ is the maximal value such that $f- w1_G-c_1 1_{S_1}\ge 0$, $c_2$ is the maximal value such that $f-w1_G-c_11_{S_1}-c_21_{S_2}\ge 0$, etc, and subsequently $d_1$ is the maximal value such that $f-w1_G-(\sum_{i=1}^{p^2+p+1} c_i1_{S_i})- d_11_{L_1}\ge 0$, etc. We can represent $f, h, \widehat{f}, \widehat{h}$ in this manner uniquely, after fixing the order of planes and lines in $G$ and $\widehat{G}$.

\medskip

We will show that if any of $f, h, \widehat{f}, \widehat{h}$ does not have the dispersive property (stated in the proposition), then $A$ tiles $G$. This is done by a case-by-case analysis, where we will use the properties of complementary 4-tuples stated in Corollary \ref{magic}.

\medskip

Case I. Let us first consider the trivial case when the support of some of the appearing functions is the whole underlying group.

\medskip

(a) If $\supp f=G$, then $\supp h=\{0\}$ by \eqref{sfh}, and hence $h=\delta_0$ by \eqref{f0h0}, and $f=1_G$ by \eqref{fasth}, and $|A|=p^3$ by \eqref{cara}. Therefore, $A=G$, and $A$ tiles the group trivially.

\medskip

(b) Similarly, if $\supp h=G$ then $\supp f=\{0\}$, and $|A|=1$, and $A$ tiles $G$ trivially.

\medskip

(c) If $\supp \widehat{f}=\widehat{G}$, then $\supp \widehat{h}=\{0\}$ by \eqref{sfhhh}, and hence $h$ is a constant function, $h=1_G$ by \eqref{f0h0}, and therefore $f=\delta_0$ by \eqref{fasth}, and again $|A|=1$.

\medskip

(d) Similarly, if $\supp \widehat{h}=\widehat{G}$, then $\widehat{f}=\{0\}$, and hence $f$ is constant 1, and $|A|=p^3$ by \eqref{cara}.

\medskip

Having dealt with Case I, we can assume  for the rest of the argument that $w=0$ in equation \eqref{decomp}. Moreover, this also implies that $m\le 0$ in \eqref{decomp} because $m>0$ would imply $\supp \widehat{f}=\widehat{G}$. Altogether, we can conclude that the representation of  $f$ takes the form

\begin{equation}\label{decomp1}
f=\sum_i (c_i1_{S_i}+d_i1_{L_i})+m\delta_0,
\end{equation}
where $c_i, d_i\ge 0$, and $m\le 0$. The same is true for the representations of $h, \widehat{f}, \widehat{h}$.

\medskip

Case II. Next, assume that the support of one of the appearing functions intersects a plane only at 0.

\medskip

(a) Let $\supp f \ \cap \ S=\{0\}$. Let $\dot{L}\subset \widehat{G}$ denote the punctured line orthogonal to $S$. Then $\widehat{f}=m\le 0$ on $\dot{L}$, and the nonnegativity of $\widehat{f}$ implies $m=0$. Also, the "empty" plane $S$ intersects every other plane $S_i$, so every $S_i$ has at least one "empty" line $S\cap S_i$, and hence the weight $c_i$ must be 0 in equation \eqref{decomp1} for every $i$. Hence, $f$ can be represented in the form $f=\sum_i d_i 1_{L_i}$, and therefore the total mass of $f$ is $p$ times the mass at 0 (as this is the case for every $L_i$), and hence $|A|=p$ is implied by \eqref{cara} and $f(0)=1$. Furthermore, the fact that $S$ is empty means that $A-A \ \cap \ S=\{0\}$, so there is exactly one point of $A$ on each coset of $S$. Therefore, $A\oplus S=G$ is a tiling. 

\medskip

(b) If $\supp h \ \cap \ S=\{0\}$, then the same argument as in (a) shows that $h$ has a decomposition $h=\sum_i \tilde{d}_i L_i$, and therefore $\frac{\sum_{\vx\in G} h(\vx)}{h(0)}=p$. Considering $h(0)=1$, this implies $\sum_{\vx\in G} h(x)=p$, and hence $|A|=p^2$ by \eqref{fasth}. Also, $\supp f\ne G$, so there exists a line $L$ such that $A-A \ \cap \ L=\{0\}$, and hence $A\oplus L=G$ is a tiling.

\medskip

(c) If $\supp \widehat{f} \ \cap \ S'=\{0\}$, then the same argument as in (a) shows that $\widehat{f}$ has a decomposition $\widehat{f}=\sum_i d'_i L'_i$, and therefore $\frac{\sum_{\vt \in \widehat{G}} \widehat{f}(t)}{\widehat{f}(0)}=p$. Considering that $\widehat{f}(0)=\sum_{\vx\in G}f(x)$ and $\sum_{\vt \in \widehat{G}} \widehat{f}(t)=p^2 f(0)$, we obtain $\sum_{\vx\in G}f(\vx)=p^2$, that is $|A|=p^2$ by \eqref{cara}. We can finish the argument as in (b): as $\supp f\ne G$, there exists a line $L$ such that $A-A \ \cap \ L=\{0\}$, and hence $A\oplus L=G$ is a tiling.

\medskip

(d) If $\supp \widehat{h} \ \cap \ S'=\{0\}$, then the same argument as in (a) shows that $\widehat{h}$ has a decomposition $\widehat{h}=\sum_i \tilde{d}'_i L'_i$, and therefore $\frac{\sum_{\vt \in \widehat{G}} \widehat{h}(t)}{\widehat{h}(0)}=p$. This implies that
$\sum_{\vx\in G}h(\vx)=p^2$, and  $\sum_{\vx\in G}f(\vx)=p$. This implies that in the decomposition \eqref{decomp1} all coefficients $c_i=0$ and $m=0$, otherwise the total mass of $f$ would be greater than $p$ times the mass at 0 (recall that $c_i\ge$ and $m\le 0$). Therefore, $f=\sum_i d_i 1_{L_i}$. Also, as $\supp \widehat{f}\ne \widehat{G}$, we have a line $L'\subset \widehat{G}$ such that $\widehat{f}=0$ on $\dot{L'}$, which implies that $f=0$ on the punctured plane $S=L'^\perp$. Finally, as $|A|=p$ and $A-A \ \cap \ S=\{0\}$ we have that $A\oplus S=G$ is a tiling.

\medskip

Case III. Finally, assume that the support of one of the appearing functions contains a whole plane. In this case, we can invoke \eqref{sfh} and \eqref{sfhhh} to conclude that the support of some other function intersects the same plane at 0 only, and we are done by Case II above.
\end{proof}

It is worth summarizing here what this  result means for spectral sets. For a spectral set $A\subset (\ZZ_p)^3$, perform the avearaging procedure leading to the complementary 4-tuple in Corollary \ref{magic}. If any of the functions $f, h, \widehat{f}, \widehat{h}$ does not have the dispersive property described in Proposition  \ref{de3} then $A$ necessarily tiles $(\ZZ_p)^3$. If all appearing functions have the dispersive property, they must have a representation
\begin{equation}\label{disp}
\sum_i d_i 1_{L_i}+ m\delta_0,
\end{equation}
where the lines $L_i$ are situated such that they do not cover any plane fully, and $d_i> 0$, $m<0$ (the latter is true because $m\ge 0$ would imply the Fourier transform being positive on the planes $L_i^\perp$).

\medskip

It would be tempting to conjecture that this situation cannot occur, that is, complementary 4-tuples $(f, h, \widehat{f}, \widehat{h})$ with all functions having the dispersive property  do not exist. However, unfortunately, the following example shows that this is not the case.

\medskip

\begin{Ex}\label{David}
Let $S_1,S_2,S_3$ be three different planes through the origin in $(\ZZ_p)^3$, and let $L_1=S_2\cap S_3$, $L_2=S_1\cap S_3$, $L_3=S_1\cap S_2$ be the lines of intersection. For the sake of easier calculations we will use a representation for $f$ and $h$ different from \eqref{disp}. (It is not difficult to re-write the representations below according to equation \eqref{disp}, and we invite the reader to do so.)

\medskip

Let $f_0=2 \cdot 1_G- 2(1_{S_1}+1_{S_2}+1_{S_3})+p(1_{L_1}+1_{L_2}+1_{L_3})$, and $h_0=(1_{S_1}+1_{S_2}+1_{S_3})- 2(1_{L_1}+1_{L_2}+1_{L_3})+2p\delta_0$.

\medskip

Then,
$\widehat{f}_0= p^2(1_{L_1^\perp }+1_{L_2^\perp }+1_{L_3^\perp })-2p^2(1_{S_1^\perp}+1_{S_2^\perp}+1_{S_3^\perp})+2p^3\delta_0$,
and $\widehat{h}_0= 2p1_{\widehat{G}}-2p(1_{L_1^\perp }+1_{L_2^\perp }+1_{L_3^\perp })+p^2(1_{S_1^\perp}+1_{S_2^\perp}+1_{S_3^\perp})$.

\medskip

Based on these formulae, it is easy (but somewhat tedious) to check that the normalized functions $f=\frac{1}{3p-4}f_0$, $h=\frac{1}{2p-3}{h_0}$, and their Fourier transforms $\widehat{f}, \widehat{h}$ form a complementary 4-tuplet $(f, h, \widehat{f}, \widehat{h})$, such that none of the appearing functions have the dispersive property of Proposition \ref{de3}.

\medskip

It may be easier to visualize this example if we identify the punctured lines $\dot{L}_i$ of $(\ZZ_p)^3$ with points of the projective plane $PG(2,\mathbb{Z}_p)$. In this picture, $S_i$ become lines on the projective plane, and the functions $f$ and $h$ can be described in terms of a triangle in the projective plane. For example, the function $h$ is positive on the lines of a triangle (with the exception of the vertices, where $h$ is 0), and zero everywhere outside. We leave the details to the reader.
\end{Ex}

\medskip

Notice, however, that in this example neither $f$ nor $h$ can come from the averaging of an indicator function of a set $A$ as in Lemma \ref{ave1}. The reason is that by equation \eqref{cara} we would have $|A|=\frac{p^2(2p-3)}{3p-4}$ or $|A|=\frac{p(3p-4)}{2p-3}$, neither of which is an integer for $p\ne 2, 3$, and for $p=2, 3$ it is easy to check (with a finite case-by-case analysis) that $f$ and $h$ cannot be a result of averaging an indicator function of any set $A$.

\medskip

Example \ref{David} can be generalized: one can take $k$ points on the projective plane, with $k-1$ lying on a line, and one point being outside. Then, a complementary 4-tuple $(f, h, \widehat{f}, \widehat{h})$ can be constructed, such that the function $h$ is positive on all the lines connecting these points (with exception of the points themselves, where $h$ is 0), and zero everywhere outside. The function $f$ is then supported on the complement $\supp h$. The exact formulae are somewhat cumbersome, so we choose to omit them.

\medskip

At this point, we do not have a reasonable conjecture as to whether $(\ZZ_p)^3$ is pd-flat or not. Therefore, we cannot decide whether the "spectral $\to$ tile" direction of Fuglede's conjecture holds in these groups.

\medskip

As a final remark we mention here R\'edei's conjecture which concerns the structure of tilings in $(\ZZ_p)^3$. It states that in any normalized tiling $A\oplus B =(\ZZ_p)^3$ (normalized meaning that $0\in A, B$ is assumed) we must have that either $A$ or $B$ is contained in a proper subgroup.

\medskip

A complete characterization of complementary 4-tuples in $(\ZZ_p)^3$ could, in principle,  lead to the solution of both Fuglede's and R\'edei's conjecture in these groups.

\end{document}